\DeclareMathOperator\arctanh{arctanh}
\newtheorem{theorem}{Theorem}
\newtheorem{corollary}[theorem]{Corollary}
\newtheorem{example}[theorem]{Example}
\newtheorem{proposition}[theorem]{Proposition}
\newcommand{\ch}{\cosh}
\newcommand{\sh}{\sinh}
\begin{document}
	\subjclass[2010]{35A30, 58J70, 58J72}
	\keywords{sinh-Gordon equation, sine-Gordon equation, B{\"a}cklund transformation.}
	\author{G. Polychrou}
	\title[Solutions of the Sinh-Gordon and sine-Gordon equations and applications]{Solutions of the Sinh-Gordon and sine-Gordon equations and applications}
	\maketitle
	\begin{abstract}
		We study the elliptic sinh-Gordon and sine-Gordon equations on the real plane and we introduce new families of solutions. We use a B{\"a}cklund transformation that connects the elliptic versions of sine-Gordon and sinh-Gordon equations. As an application, we construct new harmonic maps between surfaces, when the target is  of constant curvature $-1$.
	\end{abstract}
\section{Introduction and Statement of the Results}\label{Introduction}
Our aim is to find new solutions of the elliptic sinh-Gordon equation 
\begin{equation}\label{sinh-Gordon eqn}
	\Delta w=k\sinh(2w) , k \in \mathbb{R},
\end{equation}
and also to find new solutions of the elliptic sine-Gordon equation 
\begin{equation}\label{sine-Gordon eq}
	\Delta \theta = - l\sin(2\theta), l \in \mathbb{R},
\end{equation} 
and apply them to the harmonic map problem. 
For simplicity we shall only consider the case $k = 2$ and $l = -2$.

Firstly, we consider solutions $w$ of the sinh-Gordon equation (\ref{sinh-Gordon eqn}) that are of the form
\begin{equation}\label{tan}
	\sh w(x,y) = \tan(A(x) + B(y)),
\end{equation} 
for some real functions $A(x)$ and $B(y)$, which we calculate explicity. Generally, these functions turn out to be Jacobi elliptic, \cite{Armitage}.  Next we apply these solutions to the harmonic map problem, as in \cite{FotDask} and \cite{PPFD2021}. 

Next, we describe solutions of (\ref{sine-Gordon eq}) that are either of the form
\begin{equation}\label{FanGsine}
	\tan(\frac{\theta(x,y)}{2}) = F(x)G(y)	,
\end{equation}
or of the form
\begin{equation}\label{tanh}
	\sin(\theta(x,y)) = \tanh(C(x) + D(y)).
\end{equation}
 
In \cite{FotDask}, the authors introduce a B{\"a}cklund transformation between the solutions of the elliptic sinh-Gordon equation and the elliptic sine-Gordon. There is a \textit{B{\"a}cklund transformation} that connects solutions of equation (\ref{sinh-Gordon eqn}) and (\ref{sine-Gordon eq});
\begin{align}
	\partial_x w-\partial_y \theta&=-2\sinh w \sin \theta \label{Back1intro},\\
	\partial_y w+\partial_x\theta&=-2\cosh w \cos \theta\label{Back2intro}.
\end{align}
Therefore, given a solution $w$ of the sinh-Gordon equation, we can construct the corresponding solution $\theta$ of the sine-Gorodn equation and vice versa.

 Using the B{\"a}cklund transformation we construct new families of solutions of (\ref{sinh-Gordon eqn}). Finally, we apply these solutions in each of the above cases to find new solutions to the harmonic map problem, as in \cite{FotDask}.
 
 The study of the sinh-Gordon and sine-Gordon has been extensive (see \cite{FokPell,Hwang}). Sinh-Gordon equation has many applications in Geometric Analysis, for example in CMC surfaces 
 (see \cite{Joaquin,K}). Futhermore, the sinh-Gordon equation apply in the work on the Wente torus (see \cite{Abresch,Wente}). 
 
 There are many examples of harmonic diffeomorphisms, see for example \cite{LiTam,Minsky,Schoen,Wolf1} and their references therein. The study of harmonic maps between Riemann surfaces with constant mean curvature plays a vital role in the theory of harmonic maps, see for example \cite{FotDask,J,PPFD2021}.
 
The outline of this paper is as follows. In section 2, we review some known results. In section 3 we use solutions of the form (\ref{tan}) and we construct the corresponding harmonic map. In the last section, we use solutions of (\ref{sine-Gordon eq}) of the form (\ref{FanGsine}) and (\ref{tanh}) to construct solutions of the sinh-Gordon equation (\ref{sinh-Gordon eqn}) and the harmonic map corresponding to them.

\section{Preliminaries}\label{sec:Preliminaries}

In this section we present some known results.
Let $u : M \rightarrow N$ be a map between
Riemann surfaces $(M, g)$, $(N, h)$. We can  write  the map locally as
$u = u(z, \bar{z}) = R + iS$, where $z=x+iy$. From now on, we use the standard notation:
\[
\partial_z=\frac{1}{2}(\partial_x-i\partial_y), \quad \partial_{\bar{z}}=\frac{1}{2}(\partial_x+i\partial_y).
\]
Also, we introduce  isothermal coordinates $(x, y)$ on $M$ such that
\[g =  e^{f(z,\bar{z})}|dz|^2,\]
where $z = x + iy$, and an isothermal coordinate system $(R, S)$ on
$N$ such that
\begin{equation*}\label{targetconf}h =  e^{F(u,\bar{u})}|du|^2,
\end{equation*}
where $u = R + iS$. In isothermal coordinates,  a map between two surfaces is harmonic if and only if it satisfies
\begin{equation*}
	\partial_{z\bar{z}}u + \partial_{u}F(u,\bar{u}) \partial_{z}u\partial_{\bar{z}}u=0;
\end{equation*}
see \cite{J}. Note that this equation depends only on the conformal structure of the target. 
We define the Hopf differential of $u$ by
\begin{equation*}\label{eq:Hopf_theorem}
	\Lambda(z) dz^2=\left( e^{F(u,\bar{u})}\partial_{z} u \partial_{z} \bar{u}\right)dz^2.
\end{equation*}
The following proposition is well-known.
\begin{proposition}[\cite{J}]\label{prop:Hopf}
	A necessary and sufficient condition for  $u$ to be a harmonic map, is that the Hopf differential of $u$ is holomorphic, i.e.
	\begin{equation}\label{eq:Hopf}
		e^{F(u,\bar{u})} \partial_z u    \partial_z\bar{u}= e^{-\mu (z)},
	\end{equation}
	where $\mu (z)$ is a holomorphic function.
\end{proposition}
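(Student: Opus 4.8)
The plan is to differentiate the Hopf density $\Lambda = e^{F(u,\bar u)}\,\partial_z u\,\partial_z\bar u$ in the $\bar z$ direction and recognize the result as the harmonic map operator, together with its complex conjugate, weighted by the first derivatives of $u$. Write $\mathcal E(u) := \partial_{z\bar z}u + \partial_u F(u,\bar u)\,\partial_z u\,\partial_{\bar z}u$, so that by definition $u$ is harmonic precisely when $\mathcal E(u)\equiv 0$. I would apply the chain rule $\partial_{\bar z}F = \partial_u F\,\partial_{\bar z}u + \partial_{\bar u}F\,\partial_{\bar z}\bar u$, use $\partial_{\bar z}\partial_z = \partial_z\partial_{\bar z}$ and $\partial_{\bar z}\bar u = \overline{\partial_z u}$, and collect the four resulting terms into the two blocks sharing the common factor $\partial_z\bar u$, respectively $\partial_z u$. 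Since $F$ is real-valued we have $\overline{\partial_u F} = \partial_{\bar u}F$, so the second block is exactly $\overline{\mathcal E(u)}$, and the outcome is the identity
\[
\partial_{\bar z}\Lambda \;=\; e^{F(u,\bar u)}\Bigl(\partial_z\bar u\cdot\mathcal E(u)\;+\;\partial_z u\cdot\overline{\mathcal E(u)}\Bigr).
\]
Everything else is a short deduction from this formula.

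From the identity the sufficiency of the condition is immediate: if $u$ is harmonic then $\mathcal E(u)\equiv 0$, hence $\partial_{\bar z}\Lambda\equiv 0$, i.e. $\Lambda$ is holomorphic, and on any domain where $\Lambda$ is nonzero one may choose a holomorphic branch of $-\log\Lambda$, call it $\mu(z)$, which is exactly \eqref{eq:Hopf}. For the converse, suppose $\Lambda$ is holomorphic, so $\partial_z\bar u\,\mathcal E(u) = -\,\partial_z u\,\overline{\mathcal E(u)}$. Taking absolute values and using $|\partial_z\bar u| = |\partial_{\bar z}u|$ together with $|\overline{\mathcal E(u)}| = |\mathcal E(u)|$ gives $\bigl(|\partial_z u| - |\partial_{\bar z}u|\bigr)\,|\mathcal E(u)| = 0$. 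Now $|\partial_z u|^2 - |\partial_{\bar z}u|^2$ is precisely the Jacobian determinant of $u$, which is nonzero wherever $u$ is a local diffeomorphism — the situation throughout this paper — so $\mathcal E(u)$ vanishes there, and by continuity of $\mathcal E(u)$ it vanishes everywhere; that is, $u$ is harmonic.

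The only real work is the bookkeeping behind the displayed identity — getting the grouping right and recognizing the second block as the conjugate of the first — and this is entirely routine. The one genuine subtlety lies in the converse: the quantity one wishes to cancel is exactly the Jacobian of $u$, so some nondegeneracy hypothesis is genuinely used; this is harmless here since the maps constructed below are local diffeomorphisms. (For an (anti)holomorphic $u$ one has $\partial_z\bar u\equiv 0$, hence $\Lambda\equiv 0$, and harmonicity is trivial, so nothing is lost in that degenerate case.) I expect this Jacobian observation to be the only point requiring care; the remainder is a direct computation that can safely be left to the reader.
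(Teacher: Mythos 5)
Your computation of $\partial_{\bar z}\Lambda = e^{F(u,\bar u)}\bigl(\partial_z\bar u\cdot\mathcal E(u)+\partial_z u\cdot\overline{\mathcal E(u)}\bigr)$ is correct, and it is the standard argument for this statement; the paper itself gives no proof, deferring to the cited reference \cite{J}, where essentially this identity is the proof. Your handling of the converse is also right to flag the nondegeneracy issue: holomorphy of the Hopf differential alone does not force $\mathcal E(u)=0$ where the Jacobian $|\partial_z u|^2-|\partial_{\bar z}u|^2$ vanishes, and indeed the paper's formulation $e^{F}\partial_z u\,\partial_z\bar u=e^{-\mu(z)}$ already presupposes a nonvanishing Hopf differential, consistent with the harmonic-diffeomorphism setting you invoke. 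So the proposal is correct and takes the same route as the source the paper cites.
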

On the domain surface $M$, we can choose a specific coordinate system in order to considerably facilitate the calculations. This \textit{specific}  system is defined by the conformal transformation
\begin{equation*}\label{eq:specific}
	Z= \int e^{-\mu(z)/2} \, dz.
\end{equation*}   
In this specific system, the above equations simplify by considering $\mu(z)=0$.
In particular, the harmonic map condition (\ref{eq:Hopf}) becomes \begin{equation}\label{Hopfnew}
	e^{F(u, \bar{u})}\partial_{z} u\partial_{z}\bar{u}=1.
\end{equation} 
We shall say that the harmonic map corresponds to a solution $w$ of (\ref{sinh-Gordon eqn}) if
\[
\frac{\partial_{\bar{z}}u}{\partial_{z}u} = e^{-2w}.
\] 
We now recall a main result of \cite{FotDask}.
\begin{proposition}[\cite{FotDask}]\label{char}
	 Let $\phi = constant$ be the solution of the characteristics 
	\cite{PR},
	\begin{equation}\label{characteristic}
		\frac{dy}{dx} = i \coth(w(x,y)).
	\end{equation}
	Then, a harmonic map that corresponds to $w$ is of the form
	\begin{equation}
		u(x,y) = Re\Phi(x,y) + iIm\Phi(x,y).
	\end{equation}
\end{proposition}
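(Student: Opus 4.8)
The plan is to extract a first-order linear PDE from the correspondence condition and then integrate it by the method of characteristics, invoking Proposition~\ref{prop:Hopf} at the end to certify harmonicity. First I would start from $\partial_{\bar z}u/\partial_z u = e^{-2w}$, substitute $\partial_z u = \tfrac12(u_x - iu_y)$ and $\partial_{\bar z}u = \tfrac12(u_x + iu_y)$, and clear denominators to get $(1-e^{-2w})u_x = -i(1+e^{-2w})u_y$; since $\tfrac{1+e^{-2w}}{1-e^{-2w}} = \coth w$ this is exactly
\[
\partial_x u + i\coth\!\big(w(x,y)\big)\,\partial_y u = 0 ,
\]
a single homogeneous linear first-order equation for the $\mathbb{C}$-valued unknown $u$, with the known solution $w$ entering only through the coefficient.

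Next I would apply the method of characteristics. The characteristic curves of the displayed equation are precisely the integral curves of $\tfrac{dy}{dx} = i\coth(w(x,y))$, i.e. equation~(\ref{characteristic}); along such a curve $\tfrac{d}{dx}u(x,y(x)) = u_x + y'(x)u_y = u_x + i\coth(w)u_y = 0$, so $u$ is constant on characteristics. If $\phi(x,y) = \text{const}$ is a first integral of~(\ref{characteristic}), then $u$ must factor through $\phi$, i.e. $u(x,y) = \Phi(\phi(x,y))$ for some function $\Phi$ of one variable; writing $\Phi = \operatorname{Re}\Phi + i\operatorname{Im}\Phi$ yields the asserted form. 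To determine $\Phi$ explicitly (only up to post-composition with an isometry of the target, and as the applications require), I would substitute $u = \Phi(\phi)$ into the normalized Hopf identity~(\ref{Hopfnew}), $e^{F(u,\bar u)}\partial_z u\,\partial_z\bar u = 1$, using the constant-curvature-$(-1)$ expression for $e^{F}$ --- for instance $e^{F} = -4/(u-\bar u)^2$ in the upper half-plane model. This should collapse to an ODE for $\Phi$ together with an integrability relation in $w$ and its first and second derivatives, and that relation should reduce exactly to the sinh-Gordon equation~(\ref{sinh-Gordon eqn}); since $w$ solves it by hypothesis a suitable $\Phi$ exists, and then Proposition~\ref{prop:Hopf} guarantees that $u=\Phi(\phi)$ is harmonic.

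The principal obstacle I foresee is that $w$ is real, so $\coth w$ is real and the slope $i\coth w$ in~(\ref{characteristic}) is purely imaginary: the characteristics leave the real $(x,y)$-plane. I would deal with this by using that solutions of the elliptic equation~(\ref{sinh-Gordon eqn}) are real-analytic, so that $w$, the characteristic ODE, and its first integral $\phi$ all admit holomorphic continuations --- equivalently, one passes to coordinates adapted to the complexified characteristic foliation, in which $u$ becomes literally a function of one variable. The second delicate point is the final consistency check: one must verify that inserting $u=\Phi(\phi)$ into the harmonic map equation closes up, i.e. that the solvability condition for $\Phi$ is genuinely~(\ref{sinh-Gordon eqn}) and nothing stronger, and that the $\Phi$ produced indeed takes values in the target surface of curvature $-1$.
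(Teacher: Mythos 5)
The paper states this proposition without proof, simply recalling it from \cite{FotDask}, so the comparison is really with that reference; your reconstruction is correct and follows essentially the same route: rewrite the correspondence condition $\partial_{\bar z}u/\partial_z u=e^{-2w}$ as the linear equation $u_x+i\coth(w)\,u_y=0$, integrate along the characteristics $dy/dx=i\coth(w)$ of (\ref{characteristic}) so that $u$ is a function of the first integral $\phi$, and fix that function by the normalized Hopf condition (\ref{Hopfnew}). Your two flagged caveats --- the need for real-analytic continuation to make sense of the purely imaginary characteristic slope, and the fact that the solvability condition for $\Phi$ is exactly the sinh-Gordon equation (\ref{sinh-Gordon eqn}) --- are precisely the points the cited argument rests on, so nothing essential is missing.
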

Consider now $w$ a solution of (\ref{sinh-Gordon eqn}) and $\theta$ a solution of (\ref{sine-Gordon eq}). Set 
\begin{align*}I_1&=I_1(x)=\int_0^x\cosh w(t,0)\sin\theta(t,0)dt, \\
	I_2&=I_2(x,y)=\int_0^y\sinh w(x,s)\cos\theta(x,s)ds\\
	I_3&=I_3(x)=\int_0^x e^{2 I_1 (t)}\cosh w(t,0)\cos\theta(t,0)dt,\\
	I_4&=I_4(x,y)=e^{2 I_1 (x)}\int_0^y e^{2 I_2 (x,s)}\sinh w(x,s)\sin\theta(x,s)ds.
\end{align*}
\begin{proposition}(\cite{PPFD2021})\label{PPFD}
	Define the function $S$ by 
	\begin{align*}
		S(x,y)=S(0,0)e^{2(I_1+I_2)}, 
	\end{align*}
	and the function $R$ by
	\begin{align*}
		R(x,y)=R(0,0)+2S(0,0)(I_3-I_4). 
	\end{align*}
	Then, 
	\[
	u(x,y)=R(x,y)+iS(x,y)
	\]
	is a harmonic map that corresponds to $w$.
\end{proposition}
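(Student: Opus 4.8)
The plan is to verify, in the specific coordinate system, that the explicit $u=R+iS$ of the statement is a harmonic map into the hyperbolic plane corresponding to $w$. Since the target has constant curvature $-1$ and $S=S(0,0)e^{2(I_1+I_2)}>0$, I realise it as the upper half-plane with $e^{F(u,\bar u)}=1/S^{2}$; then ``harmonic'' is the normalised Hopf condition (\ref{Hopfnew}), $e^{F(u,\bar u)}\partial_{z}u\,\partial_{z}\bar u=1$ (equivalent to harmonicity by Proposition \ref{prop:Hopf}), and ``corresponds to $w$'' is $\partial_{\bar z}u/\partial_{z}u=e^{-2w}$. Both follow once one checks that $R$ and $S$ satisfy the first-order system
\begin{align*}
\partial_{x}R&=2S\cosh w\cos\theta, & \partial_{x}S&=2S\cosh w\sin\theta,\\
\partial_{y}R&=-2S\sinh w\sin\theta, & \partial_{y}S&=2S\sinh w\cos\theta,
\end{align*}
because this gives $\partial_{x}u=2S\cosh w\,e^{i\theta}$ and $\partial_{y}u=2iS\sinh w\,e^{i\theta}$, hence $\partial_{z}u=Se^{w+i\theta}$, $\partial_{\bar z}u=Se^{-w+i\theta}$, so $\partial_{\bar z}u/\partial_{z}u=e^{-2w}$ and $e^{F}\partial_{z}u\,\partial_{z}\bar u=S^{-2}\cdot Se^{w+i\theta}\cdot Se^{-w-i\theta}=1$ (using $\partial_{z}\bar u=\overline{\partial_{\bar z}u}$); the Hopf differential is then the constant, hence holomorphic, differential $dz^{2}$, and $u$ is harmonic by Proposition \ref{prop:Hopf}.

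Throughout I use that $w$ and $\theta$ are linked by the B{\"a}cklund relations (\ref{Back1intro})--(\ref{Back2intro}); the equations (\ref{sinh-Gordon eqn}) and (\ref{sine-Gordon eq}) enter only through the consistency of that system. The two $y$-derivatives are built into the form of $S$ and $R$: since $\partial_{y}I_{1}=0$ and $\partial_{y}I_{2}=\sinh w\cos\theta$ one gets $\partial_{y}S=2S\,\partial_{y}(I_{1}+I_{2})=2S\sinh w\cos\theta$, and since $\partial_{y}I_{3}=0$ and $\partial_{y}I_{4}=e^{2I_{1}}e^{2I_{2}}\sinh w\sin\theta$ one gets $\partial_{y}R=-2S(0,0)\,\partial_{y}I_{4}=-2S\sinh w\sin\theta$.

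The content is in the $x$-derivatives. Differentiating $I_{2}(x,y)=\int_{0}^{y}\sinh w(x,s)\cos\theta(x,s)\,ds$ under the integral sign and using (\ref{Back1intro})--(\ref{Back2intro}) one checks the pointwise identity $\partial_{x}(\sinh w\cos\theta)=\partial_{y}(\cosh w\sin\theta)$, so by the fundamental theorem of calculus $\partial_{x}I_{2}(x,y)=\cosh w(x,y)\sin\theta(x,y)-\cosh w(x,0)\sin\theta(x,0)$; together with $I_{1}'(x)=\cosh w(x,0)\sin\theta(x,0)$ this gives $\partial_{x}\log S=2\cosh w\sin\theta$, the second relation. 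For $\partial_{x}R$ the same strategy applies but is heavier: differentiating $I_{3}$ and $I_{4}$ (the latter contributing $2I_{1}'(x)e^{2I_{1}}\int_{0}^{y}e^{2I_{2}}\sinh w\sin\theta\,ds$ from $\partial_{x}e^{2I_{1}}$, plus $e^{2I_{1}}\int_{0}^{y}\partial_{x}(e^{2I_{2}}\sinh w\sin\theta)\,ds$), then rewriting the target value $2S\cosh w\cos\theta$ via the fundamental theorem of calculus applied in the integration variable $s$ to $e^{2I_{2}}\cosh w\cos\theta$, reduces $\partial_{x}R=2S\cosh w\cos\theta$ to the vanishing of one integrand (the functions below evaluated at $(x,s)$):
\[
2\cosh w(x,0)\sin\theta(x,0)\,e^{2I_{2}}\sinh w\sin\theta+\partial_{x}\big(e^{2I_{2}}\sinh w\sin\theta\big)+\partial_{s}\big(e^{2I_{2}}\cosh w\cos\theta\big)=0 .
\]
Expanding, substituting the formulas for $\partial_{x}I_{2}$ and $\partial_{s}I_{2}=\sinh w\cos\theta$, and then applying both B{\"a}cklund relations, the purely algebraic hyperbolic terms sum to $2\sinh w\cosh w$ while the derivative terms sum to $-2\sinh w\cosh w$, so the identity holds. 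I expect this $\partial_{x}R$ computation to be the main obstacle; everything else is formal. Once the first-order system is verified, $R$ and $S$ are genuine $C^{2}$ functions whenever $w$ and $\theta$ are smooth, the mixed partials agree automatically, and by the reduction of the first paragraph $u=R+iS$ is the desired harmonic map corresponding to $w$.
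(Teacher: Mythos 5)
Your proof is correct and complete. The paper itself gives no argument for Proposition \ref{PPFD} (it is quoted from \cite{PPFD2021}), so there is nothing to compare against here; your first-order system $\partial_x u = 2S\cosh w\,e^{i\theta}$, $\partial_y u = 2iS\sinh w\,e^{i\theta}$, giving $\partial_z u = Se^{w+i\theta}$ and $\partial_{\bar z} u = Se^{-w+i\theta}$, is exactly what the integrals $I_1,\dots,I_4$ are built to solve and is clearly the intended argument. The one identity you flag as the main obstacle does check out: the terms involving $\cosh w(x,0)\sin\theta(x,0)$ cancel between the first two summands, and the B{\"a}cklund relations (\ref{Back1intro})--(\ref{Back2intro}) give $\partial_x(\sinh w\sin\theta)+\partial_s(\cosh w\cos\theta)=-2\sinh w\cosh w$, which annihilates the remaining $2\sinh w\cosh w(\sin^2\theta+\cos^2\theta)$ produced by differentiating $e^{2I_2}$.
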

\section{The sinh-Gordon equation}\label{sec:The sinh-Gordon equation}
In this section, we study a new family of solutions of the sinh-Gordon equation (\ref{sinh-Gordon eqn}) that are of the form 
\begin{equation}\label{newfam}
\sinh\omega(x,y) = \tan(A(x) + B(y)).
\end{equation}
We shall find a family of such solutions and then we shall give an example of a corresponding harmonic map.
\begin{theorem}\label{aandb}
	If $w$ is a solution of the sinh-Gordon equation (\ref{sinh-Gordon eqn}) of the form
	$$
	\sinh\omega(x,y) = \tan(A(x) + B(y)) = \tan(A_0 + B_0 + \int_{0}^{x}a(t)dt + \int_{0}^{y}b(s)ds),
	$$
	then the functions $a(x)$ and $b(y)$ satisfy the differential equations
	\begin{equation}\label{aeq}
		(a'(x))^{2} = - (a(x))^{4} + c_1(a(x))^{2} + c_2,
	\end{equation}
	\begin{equation}\label{beq}
		(b'(y))^{2} = -(b(y))^{4} + (8 - c_1)(b(y))^{2} + c_3.
	\end{equation}
where $a(x) = A'(x)$, $b(y) = B'(y)$ and $c_1,c_2$ and $c_3$ are constants such that $4c_1 = 16 + c_3 - c_2$. 
\end{theorem}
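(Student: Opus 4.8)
The plan is to substitute the ansatz directly into the sinh-Gordon equation (with $k=2$), extract from it a single first-order relation, and then differentiate that relation to separate the variables. Write $\phi=\phi(x,y)=A(x)+B(y)$, so $\phi_x=a$ and $\phi_y=b$. Differentiating $\sinh w=\tan\phi$ gives $\cosh w\,w_x=a\sec^2\phi$; since $\cosh^2 w=1+\sinh^2 w=1+\tan^2\phi=\sec^2\phi$, the cancellation $\sec^2\phi/\cosh w=\cosh w$ costs no sign choice and yields the clean identities $w_x=a\cosh w$, $w_y=b\cosh w$. One further differentiation gives $w_{xx}=a'\cosh w+a^2\sinh w\cosh w$ and the symmetric formula for $w_{yy}$, so $\Delta w=(a'+b')\cosh w+(a^2+b^2)\sinh w\cosh w$. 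Since $2\sinh(2w)=4\sinh w\cosh w$, equation (\ref{sinh-Gordon eqn}) becomes, after dividing by the nowhere-vanishing factor $\cosh w$,
\[
a'(x)+b'(y)+\bigl(a(x)^2+b(y)^2-4\bigr)\tan\phi=0 ;
\]
call this $(\star)$, and recall $\tan\phi=\sinh w$.

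Next I would differentiate $(\star)$ in $x$, using $\partial_x\tan\phi=a(1+\tan^2\phi)$ and then eliminating the resulting $\tan^2\phi$ via $(\star)$ written as $(a^2+b^2-4)\tan\phi=-(a'+b')$. A short computation yields
\[
a''+a(a^2+b^2-4)=a\tan\phi\,(b'-a'),
\]
and differentiating $(\star)$ in $y$ gives the mirror identity $b''+b(a^2+b^2-4)=b\tan\phi\,(a'-b')$. Dividing the first by $a$ and the second by $b$ and adding, the two $\tan\phi$-terms are exactly opposite and cancel, leaving
\[
\Bigl(\tfrac{a''}{a}+2a^2\Bigr)+\Bigl(\tfrac{b''}{b}+2b^2\Bigr)=8 .
\]
The first group depends only on $x$, the second only on $y$, so each is a constant; write $a''/a+2a^2=c_1$ and $b''/b+2b^2=8-c_1$. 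These read $a''=c_1 a-2a^3$ and $b''=(8-c_1)b-2b^3$; multiplying by $2a'$ (resp. $2b'$) and integrating gives $(a')^2=-a^4+c_1a^2+c_2$ and $(b')^2=-b^4+(8-c_1)b^2+c_3$, which are precisely (\ref{aeq}) and (\ref{beq}).

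What is still missing is the relation $4c_1=16+c_3-c_2$, which cannot come from the differentiated equations and must be read off from $(\star)$ itself. Substituting $a''=c_1a-2a^3$ into $a''+a(a^2+b^2-4)=a\tan\phi\,(b'-a')$ produces the second first-order relation $\tan\phi\,(b'-a')=c_1-4-a^2+b^2$, while $(\star)$ gives $\tan\phi\,(a^2+b^2-4)=-(a'+b')$. Solving both for $\tan\phi$ and equating (legitimate on the dense open set where the relevant denominators are nonzero) gives $(a')^2-(b')^2=(c_1-4-a^2+b^2)(a^2+b^2-4)$; inserting the two first integrals, every term involving $a$ or $b$ cancels and what remains is exactly $c_3-c_2=4c_1-16$.

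I expect the main difficulty to be computational bookkeeping rather than anything conceptual: the cancellation of the $\tan\phi$-terms after the two differentiations, and especially the final polynomial identity in $a$ and $b$ that isolates the constant relation, are sign-sensitive manipulations that must be carried out carefully. One should also note that the divisions performed — by $\cosh w$ (harmless), by $a^2+b^2-4$, by $a$, by $b$, and by $a'+b'$ and $b'-a'$ — are valid only off their zero sets, so the identities are first established on dense open sets and then extended by continuity, with the degenerate loci ($a\equiv 0$, $b\equiv 0$, $a^2+b^2\equiv 4$, or $a',b'$ constant) handled as easy special cases.
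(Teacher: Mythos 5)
Your proposal is correct and follows essentially the same route as the paper: reduce the PDE to the relation $a'+b'+(a^2+b^2-4)\tan(A+B)=0$, differentiate in $x$ and then in $y$ to separate variables into $a''/a+2a^2=c_1=8-b''/b-2b^2$, and integrate once to obtain the stated first-order equations. Your back-substitution argument for the constraint $4c_1=16+c_3-c_2$ is a correct and welcome expansion of the step the paper dispatches with the single phrase ``using (\ref{AandB}) we derive the connection.''
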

\begin{proof}
	Differentiating $w(x,y)$ and replacing into (\ref{sinh-Gordon eqn}) we have
	\[
	(A''(x) + B''(y))\cosh w + ((A'(x))^{2} + (B'(y))^{2})\sinh w \cosh w
	= 2\sinh 2w.
	\]
	Using the fact that $\sinh\omega(x,y) = \tan(A(x) + B(y))$ and dividing with $\cosh w$ we get
	\[
	\tan(A(x) + B(y)) = \frac{A''(x) + B''(y)}{4 - (A'(x))^{2} - (B'(y))^{2}}.
	\]
	Differentiating with respect to $x$ 
	and using the identity $1/\cos^{2}\phi = 1 + \tan^{2}\phi$ we obtain
	\begin{equation}\label{AandB}
		A'(x)(4 - (A'(x))^{2} - (B'(y))^{2})^{2} + A'(x)(A''(x) + B''(y))^{2} 
	\end{equation}
	\[
	= A'''(x)(4 - (A'(x))^{2} - (B'(y))^{2}) + 2A'(x)A''(x)(A''(x) + B''(y)).
	\]
	Differentiating with respect to $y$ we have
	\[
	- 4A'(x)B'(y)B''(y)(4 - (A'(x))^{2} - (B'(y))^{2}) + 2A'(x)B'''(y)(A''(x) + B''(y))
	\]
	\[
	= - 2B'(y)B''(y)A'''(x) + 2A'(x)A''(x)B'''(y).
	\]
	Therefore we get that
	\[
	\frac{A'''(x)}{A'(x)} + 2(A'(x))^{2} = 8 - \frac{B'''(y)}{B'(y)} - 2(B'(y))^{2} = c_{1}.
	\]
	So we derive the equations
	\begin{equation}\label{DerA}
		A'''(x) + 2(A'(x))^{3} = c_{1}A'(x),
	\end{equation}
	\begin{equation}\label{DevB}
		B'''(y) + 2(B'(y))^{3} = (8 - c_{1})B'(y).
	\end{equation}
	Starting with equation (\ref{DerA}), we have
	\[
	2A''(x)A'''(x) + 4(A'(x))^{3}A''(x) = c_{1}2A'(x)A''(x)
	\]
	\[
	\Rightarrow
	(A''(x))^{2} + (A'(x))^{4} = c_{1}(A'(x))^{2} + c_2.
	\]
	By letting $A(x) = A_0 + \int_{0}^{x}a(t)dt$, we obtain
	\begin{equation}\label{eqa}
		(a'(x))^{2} = - (a(x))^{4} + c_{1}(a(x))^{2} + c_2.
	\end{equation}
	In a similar manner, we get
	\begin{equation}\label{eqb}
		(b'(y))^{2} = -(b(y))^{4} + (8 - c_{1})(b(y))^{2} + c_3,
	\end{equation}
	where $B(y) = B_0 + \int_{0}^{y}b(s)ds$. Finally, using (\ref{AandB}) we derive the connection
	$4c_1 = c_3 - c_2 + 16$. 
\end{proof}
\begin{corollary}
	By using the initial conditions $A'(0) = a(0) = B'(0) = b(0) = 0$, we have $c_2 = a'(0)^2 = 16\alpha^2$, 
	$c_3 = b'^(0)^2 = 16\beta^2$ and $c_1 = 4(1 - \alpha^2 + \beta^2)$ and the equations (\ref{aeq}) and (\ref{beq}) turn into
	\begin{equation}\label{aspecial}
		(a'(x))^{2} = - (a(x))^{4} + 4(1 - \alpha^{2} + \beta^{2})(a(x))^{2} + 16\alpha^{2},
	\end{equation}
	\begin{equation}\label{bspecial}
		(b'(y))^{2} = -(b(y))^{4} + 4(1 + \alpha^{2} - \beta^{2})(b(y))^{2} + 16\beta^{2}.
	\end{equation}
\end{corollary}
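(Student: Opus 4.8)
The plan is entirely elementary: substitute the prescribed initial data into the three identities already established in Theorem~\ref{aandb} --- the ODE \eqref{aeq} for $a$, the ODE \eqref{beq} for $b$, and the scalar constraint $4c_1 = 16 + c_3 - c_2$ --- and then solve for the constants $c_1,c_2,c_3$ in terms of the initial slopes $a'(0)$ and $b'(0)$. Note first that since $a=A'$ and $b=B'$, the hypothesis $A'(0)=a(0)=B'(0)=b(0)=0$ is just $a(0)=0$ and $b(0)=0$.

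First I would evaluate \eqref{aeq} at $x=0$: because $a(0)=0$, the quartic and quadratic terms vanish and the identity collapses to $c_2=(a'(0))^2$; in particular $c_2\ge 0$, so we may legitimately write $c_2=16\alpha^2$ for a real constant $\alpha$, which is exactly the definition $\alpha^2 := (a'(0))^2/16$ used in the statement. The same evaluation of \eqref{beq} at $y=0$, using $b(0)=0$, gives $c_3=(b'(0))^2=:16\beta^2$. Next I would feed these into the constraint, obtaining $4c_1 = 16 + 16\beta^2 - 16\alpha^2$, hence $c_1 = 4(1-\alpha^2+\beta^2)$ and therefore $8-c_1 = 4(1+\alpha^2-\beta^2)$. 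Substituting the now fully explicit values of $c_1$, $c_2$, $c_3$ back into \eqref{aeq} and \eqref{beq} reproduces \eqref{aspecial} and \eqref{bspecial} verbatim.

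There is essentially no obstacle to overcome here; the only point deserving a sentence of justification is that the reparametrisation $c_2=16\alpha^2$, $c_3=16\beta^2$ is admissible over the reals, and this is forced precisely by the fact that evaluating the first-order equations at the origin expresses $c_2$ and $c_3$ as squares of real numbers. The choice of the factor $16$ is purely a matter of normalisation, convenient for the elliptic-function analysis that follows.
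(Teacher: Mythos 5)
Your proposal is correct and is exactly the computation the paper intends (the corollary is stated without proof): evaluating \eqref{aeq} and \eqref{beq} at the origin with $a(0)=b(0)=0$ yields $c_2=(a'(0))^2=16\alpha^2$ and $c_3=(b'(0))^2=16\beta^2$, and the constraint $4c_1=16+c_3-c_2$ then gives $c_1=4(1-\alpha^2+\beta^2)$ and $8-c_1=4(1+\alpha^2-\beta^2)$. Your remark that the nonnegativity of $c_2$ and $c_3$ is what legitimises writing them as $16\alpha^2$ and $16\beta^2$ over the reals is a worthwhile point the paper leaves implicit.
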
 
\par  In \cite{FotDask}, the authors prove that using solutions of the elliptic sinh-Gordon equation they are able to construct harmonic maps to a hyperbolic surface. Similarly, we shall use Proposition \ref{char}. Let us consider in Theorem \ref{aandb} $c_1 = 4, c_2 =  c_3 = 0$. Then, we get the equations
\begin{equation}
	(a'(x))^2 = - (a(x))^4 + 4(a(x))^2, (b'(y))^2 = - (b(y))^4 + 4(b(y))^2.
\end{equation}
One can easily solve these equations and so
$a(x) = 2 sech(2x)$ and $b(y) = 2 sech(2y)$. We have to emphasize that we consider these special constants, in order to avoid the elliptic functions and make the calculations far easier. So the solution of the new family presented in Section 3, turns out to be
\begin{equation}\label{specialsol1}
	\sh w(x,y) = \frac{\sh(2x) + \sh(2y)}{1 - \sh(2x)\sh(2y)}.
\end{equation} 
We shall now provide the corresponding harmonic map.
\begin{example}
Using Proposition (\ref{char}), we have to solve the following equation in order to find that the corresponding harmonic map:
\begin{equation}
	\frac{dy}{dx} = i \coth w(x,y) = i \frac{\ch(2x)\ch(2y)}{\sh(2x) + \sh(2y)}.
\end{equation}
By lengthy calculations we find that a corresponding harmonic map is
\begin{equation}\label{harmonic1}
	u(x,y) = R(x,y) + iS(x,y),
\end{equation}
where
\begin{equation}\label{R1}
	R(x,y) = sech(2y) - \sh(2x)\tanh(2y)
\end{equation}
and 
\begin{equation}\label{S1}
	S(x,y) = \sh(2x)sech(2y) + \tanh(2y) - 2y.
\end{equation}
An implicit formula for the metric on the target of curvature $-1$ is
\begin{equation}\label{targetmetric}
	e^{F(u,\bar{u})}dud\bar{u} = 
	4\frac{\ch^2(2x)\ch^2(2y)dx^2 + (\sh(2x) + \sh(2y))^2dy^2}
	{(1 - \sh(2x)\sh(2y))^2}
\end{equation}
where $x = x(R,S)$ and $y = y(R,S)$.
\end{example}
\section{Solutions via the elliptic sine-Gordon equation}
In this section we present new solutions of the sine-Gordon equation (\ref{sine-Gordon eq}), next we find solutions of the sinh-Gordon equation (\ref{sinh-Gordon eqn}) and finally we construct harmonic maps to a hyperbolic surface.

At first, we focus on solutions of (\ref{sine-Gordon eq}) of the form
\begin{equation}\label{solsine-Gordon1}
	\theta(x,y) = 2\arctan(F(x)G(y)).
\end{equation}
\begin{proposition}\label{solution1}
	If $\theta(x,y)$ is a solution of the sine-Gordon equation (\ref{sine-Gordon eq}) of the form (\ref{solsine-Gordon1}), then the functions $F(x)$ and $G(y)$ satisfy the following equations 
	\begin{equation}\label{F}
		F'(x)^{2} = AF^{4}(x) + BF^{2}(x) + C,
	\end{equation}
	\begin{equation}\label{G}
		G'(y)^{2} = CG^{4}(y) - (4 + B)G^{2}(y) + A,
	\end{equation}
	where $A,B$ and $C$ are arbitrary constants.
\end{proposition}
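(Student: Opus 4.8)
The plan is to insert the ansatz (\ref{solsine-Gordon1}) into (\ref{sine-Gordon eq}), reduce to a separated relation, and peel off first-order ODEs for $F$ and $G$, exactly by the mechanism used in the proof of Theorem \ref{aandb}. A convenient first step is to simplify the derivatives of $\theta$: differentiating $\tan(\theta/2)=F(x)G(y)$ gives $\theta_x=2F'G\cos^2(\theta/2)$, and since $\sin\theta=2\sin(\theta/2)\cos(\theta/2)$ together with $\tan(\theta/2)=FG$ this collapses to
\[
\theta_x=\frac{F'}{F}\sin\theta,\qquad \theta_y=\frac{G'}{G}\sin\theta .
\]
Differentiating once more, $\theta_{xx}=\left(\tfrac{F'}{F}\right)'\sin\theta+\left(\tfrac{F'}{F}\right)^{2}\sin\theta\cos\theta$, and symmetrically for $\theta_{yy}$. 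Substituting into (\ref{sine-Gordon eq}), using $\sin 2\theta=2\sin\theta\cos\theta$, and cancelling the common factor $\sin\theta$ (legitimate away from the zero set of $FG$, then extended by continuity), the equation becomes
\[
\left(\frac{F'}{F}\right)'+\left(\frac{G'}{G}\right)'=-\left[\left(\frac{F'}{F}\right)^{2}+\left(\frac{G'}{G}\right)^{2}+4\right]\cos\theta .
\]
Inserting $\cos\theta=(1-F^{2}G^{2})/(1+F^{2}G^{2})$ and clearing the denominator yields a relation of the shape
\[
a(x)+b(y)+F(x)^{2}G(y)^{2}\bigl[\bar a(x)+\bar b(y)\bigr]=0,
\]
in which $a(x)=F''/F+2$ and $\bar a(x)=F''/F-2(F'/F)^{2}-2$ depend on $x$ alone, and $b,\bar b$ are the analogous functions of $y$.

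Next I would run the separation-of-variables argument properly. One should not simply declare ``$F$ and $G$ independent'', since $F',F''$ are not free; instead I would differentiate the displayed relation with respect to $x$ and then with respect to $y$, which kills the pure-$x$ and pure-$y$ pieces and leaves an identity of the form (function of $x$) $=$ (function of $y$), hence each side equals a constant $\lambda$. Unwinding $\lambda$ forces $F^{2}=\lambda\bigl(F''/F+2\bigr)+\mathrm{const}$, i.e. $F$ solves a Duffing-type equation $F''=2AF^{3}+BF$; multiplying by $2F'$ and integrating gives
\[
(F'(x))^{2}=AF^{4}(x)+BF^{2}(x)+C
\]
for constants $A,B,C$. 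By the $x\leftrightarrow y$, $F\leftrightarrow G$ symmetry of the cleared relation, $G$ likewise satisfies $(G'(y))^{2}=A'G^{4}(y)+B'G^{2}(y)+C'$.

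Finally I would feed both first integrals back into $a(x)+b(y)+F^{2}G^{2}[\bar a+\bar b]=0$. From $(F')^{2}=AF^{4}+BF^{2}+C$ one gets $F''/F=2AF^{2}+B$ and $(F'/F)^{2}=AF^{2}+B+C/F^{2}$, so the left-hand side simplifies to a polynomial in $F^{2}$ and $G^{2}$ involving only the monomials $1,F^{2},G^{2},F^{2}G^{2}$; since these are linearly independent functions of $(x,y)$ in the non-degenerate case, all four coefficients must vanish, which reduces to the three relations $A'=C$, $C'=A$ and $B'=-(4+B)$. Relabelling the three remaining free parameters as $A,B,C$, this is precisely the statement that $F$ satisfies (\ref{F}) and $G$ satisfies (\ref{G}), proving the proposition.

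The main obstacle I anticipate is carrying out the separation honestly: keeping track that the relation involves $F,F',F''$ (not just $F$) throughout the iterated differentiation, and disposing of the degenerate branches — $F$ or $G$ constant, or the vanishing of the various derivatives one divides through by — by treating them as limiting cases. The closing coefficient comparison is routine but must be done carefully in order to reproduce the exact asymmetric form of (\ref{G}).
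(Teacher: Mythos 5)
Your proposal is correct and follows essentially the same route as the paper: reduce the PDE to a separated algebraic relation in $x$ and $y$, differentiate in both variables to extract a separation constant, integrate twice to obtain the quartic first integrals, and substitute back to fix the constants (the paper merely symmetrizes first via $G=1/H$, and is less explicit than you are about the final coefficient comparison). One intermediate identity is mis-transcribed --- the mixed differentiation gives $F^2\bigl(F''/F-2(F'/F)^2-2\bigr)=\lambda F^2+\mu$, not $F^{2}=\lambda\bigl(F''/F+2\bigr)+\mathrm{const}$ --- but the Duffing equation $F''=2AF^{3}+BF$ and everything downstream of it are correct.
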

\begin{proof}
	Set $G(y) = 1/H(y)$. Dividing equation (\ref{sine-Gordon eq1}) by $2F(x)H(y)$ we get
	\begin{equation}\label{FandH}
		\left(\frac{F''(x)}{F(x)} - \frac{H''(y)}{H(y)}\right)(F^{2}(x) + H^{2}(y)) - 2F'(x)^{2} + 2H'(y)^{2} = 4F^{2}(x) - 4H^{2}(y).
	\end{equation}
	Differentiating with respect to $x$ and $y$, we get
	\[
	\frac{1}{F'(x)F(x)}(\frac{F''(x)}{F(x)})' = \frac{1}{H'(y)H(y)}(\frac{H''(y)}{H(y)})' = 4A.
	\]
	Therefore,
	\[
	(\frac{F''(x)}{F(x)})' = 4AF'(x)F(x), (\frac{H''(y)}{H(y)})' = 4AH'(y)H(y).
	\]
	Integrating twice, we obtain 
	\begin{equation}\label{F(A,B,C)}
	F'(x)^{2} = AF^{4}(x) + BF^{2}(x) + C,
	\end{equation}
	\begin{equation}\label{G(A,B,C)}
	H'(y)^{2} = A'H^{4}(y) + B'H^{2}(y) + C'.
	\end{equation}
	Taking into account (\ref{FandH}) and that $G(y) = 1/H(y)$, equations (\ref{F}) and (\ref{G}) follow.
\end{proof}
\par Since we have a solution of the sine-Gordon equation of the form (\ref{solsine-Gordon1}), we shall demonstrate how to compute the corresponding solution of the sinh-Gordon equation using the B{\"a}cklund transformation. To simplify the calculations, we set
\[
Y = Y(y) = \int_{0}^{y}\cos\theta(0,s)ds, \quad
X = X(x,y) = \int_{0}^{x}\sin\theta(t,y)dt.
\]
\begin{proposition}
	If $\theta(x,y)$ is a solution of (\ref{sine-Gordon eq1}) of the form (\ref{solsine-Gordon1}) with initial conditions $F'(0) = 0$ and $w(0,0) = 0$ then 
	\begin{equation}\label{sinh-Godron1}
		\tanh(\frac{w(x,y)}{2}) = 
		\frac{2 - K\tan(Y) + \sqrt{K^2 + 4}\tanh(\frac{\sqrt{K^2 + 4}}{2}X)}
		{\sqrt{K^2 + 4} + (2 - K\tan(Y))\tanh(\frac{\sqrt{K^2 + 4}}{2}X)},
	\end{equation}
	where $K = K(y) = \frac{H'(y)}{H(y)}$.
\end{proposition}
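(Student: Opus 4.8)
The plan is to regard the B{\"a}cklund system (\ref{Back1intro})--(\ref{Back2intro}) as a determined first-order system for the unknown $w$, the function $\theta$ now being known, and to integrate it by quadratures along the broken path that runs first up the segment $\{x=0\}$ and then along the horizontal line at height $y$; this is exactly the path that the auxiliary functions $Y(y)$ and $X(x,y)$ encode. The preliminary step is to exploit the special form $\theta=2\arctan\!\big(F(x)/H(y)\big)$ (so that $G=1/H$): it gives $\sin\theta = 2FH/(F^{2}+H^{2})$, $\cos\theta = (H^{2}-F^{2})/(F^{2}+H^{2})$, and, on differentiating, the factorizations
\[
\partial_{x}\theta=\frac{F'(x)}{F(x)}\sin\theta,\qquad
\partial_{y}\theta=-\frac{H'(y)}{H(y)}\sin\theta=-K(y)\sin\theta ,
\]
with $K=K(y)=H'(y)/H(y)$ constant along each horizontal line.

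On $\{x=0\}$ the hypothesis $F'(0)=0$ makes $\partial_{x}\theta$ vanish, so (\ref{Back2intro}) degenerates to the separable ODE $\partial_{y}w(0,y)=-2\cosh w(0,y)\cos\theta(0,y)$. Using $\int dw/\cosh w = \arctan(\sinh w)=2\arctan(\tanh(w/2))$ and the normalization $w(0,0)=0$, I would integrate this to $\arctan(\sinh w(0,y))=-2Y(y)$, i.e. an explicit expression for $\tanh(w(0,y)/2)$ in terms of $Y$. This supplies the initial data for the second leg of the path.

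Along a fixed horizontal line $K$ is constant, and (\ref{Back1intro}) reads $\partial_{x}w=\partial_{y}\theta-2\sinh w\sin\theta=-\big(K+2\sinh w\big)\sin\theta$. The efficient way to solve this is to introduce the antiderivative $g(w):=\int dw/(K+2\sinh w)$, for then $\partial_{x}\big(g(w)\big)=-\sin\theta(x,y)$, and since $X(\cdot,y)$ is an antiderivative of $\sin\theta(\cdot,y)$ we obtain $g\big(w(x,y)\big)=g\big(w(0,y)\big)-X$. Now $g$ is elementary: the substitution $v=\tanh(w/2)$ turns $\int dw/(K+2\sinh w)$ into $-\int 2\,dv/(Kv^{2}-4v-K)$, whose quadratic has roots $r_{\pm}=(2\pm\sqrt{K^{2}+4})/K$ with $r_{+}r_{-}=-1$; this product relation is precisely what makes $g$ collapse to a single term, namely $g(w)=-\dfrac{2}{\sqrt{K^{2}+4}}\arctanh\!\Big(\dfrac{2-K\tanh(w/2)}{\sqrt{K^{2}+4}}\Big)$. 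Inserting this into $g(w(x,y))=g(w(0,y))-X$, cancelling the common factor $-2/\sqrt{K^{2}+4}$, and then applying $\tanh$ together with the addition formula $\tanh(a+b)=(\tanh a+\tanh b)/(1+\tanh a\tanh b)$ yields, after clearing denominators, the closed-form expression of the statement: the quantity $2-K\tanh(w(0,y)/2)$ — which, by the previous step, is an explicit function of $Y$ — occupies the numerator and denominator in the displayed symmetric way, while $K$ and $\sqrt{K^{2}+4}$ enter as indicated.

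The two quadratures are routine; I expect the delicate points to be the elementary evaluation of $g(w)$ in the compact $\arctanh$ form above (keeping the two roots, the branch, and the signs straight, and checking that the constant carried over from the $\{x=0\}$ leg is consistent with $w(0,0)=0$) and the final hyperbolic simplification via the half-angle and addition identities. One should also remark at the end that the $w$ so constructed is genuinely a solution of (\ref{sinh-Gordon eqn}): because $\theta$ solves the sine-Gordon equation (\ref{sine-Gordon eq}), the B{\"a}cklund system (\ref{Back1intro})--(\ref{Back2intro}) is compatible, so the function obtained by solving (\ref{Back2intro}) on $\{x=0\}$ and (\ref{Back1intro}) along horizontals satisfies both equations on the whole plane, and hence $w$ satisfies $\Delta w=2\sinh 2w$.
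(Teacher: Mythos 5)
Your proposal follows essentially the same route as the paper: integrate the B\"acklund equation (\ref{Back2intro}) along $\{x=0\}$ (where $F'(0)=0$ kills $\partial_x\theta$) to get $\tanh(w(0,y)/2)=-\tan Y$, then integrate (\ref{Back1intro}) along horizontal lines as a separable ODE in $w$ with $\sin\theta\,dx$ as the measure, and combine the two $\arctanh$ expressions by the addition formula. Your evaluation of $\int dw/(K+2\sinh w)$ via $v=\tanh(w/2)$ is correct and matches the paper's $\arctanh$ antiderivative up to sign. Two points deserve care, though. First, with $G=1/H$ one has $\partial_y\theta=-K\sin\theta$, so (\ref{Back1intro}) gives $w_x=-(K+2\sinh w)\sin\theta$ as you say; carried through, your quadrature produces the combination $2-K\tanh(w(0,y)/2)=2+K\tan Y$, whereas the displayed formula (\ref{sinh-Godron1}) (and the paper's intermediate step $w_x=\sin\theta(K-2\sinh w)$) has $2-K\tan Y$ --- so your argument, done consistently, does not land on the stated sign, and you should either reconcile this or note the discrepancy explicitly. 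Second, the final ``clearing denominators'' is not a one-liner: the addition formula gives a M\"obius relation for the quantity $\bigl(2-K\tanh(w/2)\bigr)/\sqrt{K^2+4}$, not for $\tanh(w/2)$ itself, and one must still invert that relation to isolate $\tanh(w(x,y)/2)$; as a sanity check, the displayed right-hand side at $x=0$ (where $X=0$) should reduce to $-\tan Y$, and verifying this is exactly where the remaining algebra (and the sign issue) lives. Your closing remark on compatibility of the B\"acklund system, guaranteeing that the constructed $w$ solves (\ref{sinh-Gordon eqn}), is a worthwhile addition not present in the paper.
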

\begin{proof}
	We shall use the B{\"a}cklund transformation,
	\begin{equation}\label{Back1}
		w_{x} - \theta_{y} = - 2\sinh w\sin\theta,
	\end{equation}
	\begin{equation}\label{Back2}
		w_{y} + \theta_{x} = - 2\cosh w\cos\theta.
	\end{equation}
	Letting $x = 0$ in equation (\ref{Back2}), we obtain
	\[
		\frac{\partial w(0,y)}{\partial y} + \frac{\partial \theta(0,y)}{\partial x} = - 2\cosh w(0,y)\cos\theta(0,y),
	\]
	and since $F'(0) = 0$ and $w(0,0) = 0$, we get
	\[
		\frac{\partial w(0,y)}{\partial y} = - 2\cosh w(0,y)\cos\theta(0,y)
	\]
	\begin{equation}\label{w(0,y)1}
		\tanh\frac{w(0,y)}{2} = - \tan(Y(y)).
	\end{equation}
	Using equation (\ref{Back1}), we have
\[
	\frac{\partial w(x,y)}{\partial x} = \sin(\theta(x,y))(\frac{H'(y)}{H(y)} - 2\sinh w(x,y))
\]
and by integrating we get
\[
\arctanh\left(\frac{K\tanh(\frac{w(x,y)}{2}) + 2}{\sqrt{K^2 + 4}}\right)
- \arctanh(\frac{K\tanh(\frac{w(0,y)}{2}) + 2}{\sqrt{K^2 + 4}})
= \frac{\sqrt{K^2 + 4}}{2}X(x,y).
\]
	After several calculations with the help of (\ref{w(0,y)1}) and the use of trigonometric identies, equation (\ref{sinh-Godron1}) follows and the proof is complete.
\end{proof}
\par Since we calculated a family of solutions of the sinh-Gordon equation, we can construct the corresponding harmonic map to a hyperbolic surface.
\begin{example}\label{ex1}
	Consider the case where $A = 1$,$B = -2$ and $C = 1$. Then we can calculate the solutions of equations (\ref{F}) and (\ref{G}) as the constant functions $F(x) = G(y) = 1$. The solution of sine-Gordon equation is again the constant function $\theta(x,y) = \frac{\pi}{2}$. Using the B{\"a}cklund transformation, we calculate the solution of the sinh-Gordon equation as
	\begin{equation}\label{wex1}
		w(x,y) = 2\arctanh(e^{2x}).
	\end{equation}  
As we construct a solution of the sinh-Gordon equation, we can derive a corresponding harmonic map by using Proposition \ref{char}. The harmonic map is
\begin{equation}\label{harex1}
	u(x,y) = y + \frac{\epsilon\sh(2x)}{2}, 
\end{equation}
This example is an one-soliton solution of sinh-Gordon equation. It was presented for first time in \cite{LiTam} as a harmonic map form the upper half plane to itself.
\end{example}
\begin{example}\label{ex2}
	Consider the solution of sine-Gordon equation
	\begin{equation}\label{ThetaEx2}
	\tan(\frac{\theta(x,y)}{2}) = 2y\sec(2x).
	\end{equation}
	Using the B{\"a}cklund transformation, we calculate the solution of the sinh-Gordon equation as
	\begin{equation}\label{WEx2}
	\tanh(\frac{w(x,y)}{2}) = \frac{\cos(y)(\sin(2x) - 2y) + \sin(y)}{\cos(y) + (2y + \sin(2x))\sin(y)}.
	\end{equation}
	By Proposition \ref{char} we find that a harmonic map is $u(x,y) = R(x,y) + iS(x,y)$, where
\begin{equation}\label{REx2}
	R(x,y) = 
	\frac
	{\cos(2y)\cos^2(2x) + 4y(\sin(2x) + \sin(2y) - y\cos(2y))}
	{4y^2 + \cos^2(2x)},
\end{equation}
\begin{equation}\label{SEx2}
	S(x,y) = 2x + 
	\frac
	{4y\cos(2x)\cos(2y) - 2\cos(2x)(\sin(2x) + \sin(2y))}
	{4y^2 + \cos^2(2x)}.
\end{equation}
The metric on the target in implicit form is 
\begin{equation}\label{metric2}
	I = 4 
	\frac
	{(3 + 8y^2 - \cos(4x) + 4\sin(2x)(\sin(2y) - 2y\cos(2y)))^2dx^2}
	{(\cos(2y)(1 - 8y^2 + \cos(4x)) + 8y(\sin(2x) + \sin(2y)))^2} 
\end{equation}
\[+
4\frac{(8y\cos(2y) - 4\sin(2x) + \sin(2y)(8y^2 + \cos(4x) - 3))^2dy^2}
	{(\cos(2y)(1 - 8y^2 + \cos(4x)) + 8y(\sin(2x) + \sin(2y)))^2}
\]
where $x = x(R,S)$ and $y = y(R,S)$.
\end{example}
\par Secondly, we consider solutions of the sine-Gordon equation
\begin{align}\label{sineGordoneq}
	\Delta\theta(x,y) = - 2\sin(2\theta)
\end{align}
of the form
\begin{align}\label{sin(theta)}
	\sin\theta(x,y) = \tanh(C(x) + D(y)).
\end{align}
\begin{proposition}
	If $\theta$ is a solution of the sine-Gordon equation (\ref{sineGordoneq}) of the form
	\[
	\sin\theta(x,y) = \tanh(C(x) + D(y)) = \tanh(C(0) + D(0) 
	+ \int_{0}^{x}c(t)dt + \int_{0}^{y}d(s)ds)	
	\]
	then the functions $c(x)$ and $d(y)$ satisfy the differential equations
	\begin{align}\label{cdiffeq}
		(c'(x))^{2} = (c(x))^{4} + c_{4}(c(x))^{2} + c_{5}
	\end{align}
	\begin{align}\label{ddiffeq}
		(d'(y))^{2} = (d(y))^{4} - (8 + c_{4})(d(y))^{2} + c_{6},
	\end{align}
	where $c(x) = C'(x)$, $d(y) = D'(y)$ and $c_{4},c_{5},c_{6}$ are arbitrary constants such that $16 + 4c_{4} = c_{6} - c_{5}$.
\end{proposition}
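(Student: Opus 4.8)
The plan is to follow the argument used for Theorem~\ref{aandb}, with the hyperbolic identity $1-\tanh^{2}(C(x)+D(y))=1-\sin^{2}\theta=\cos^{2}\theta$ playing the role that $\sec^{2}(A+B)=\cosh^{2}w$ played there. Write $c=C'$ and $d=D'$, so $c'=C''$, $d'=D''$, and so on. First I would differentiate the ansatz $\sin\theta=\tanh(C+D)$: since $\partial_{x}\tanh(C+D)=(1-\tanh^{2}(C+D))\,c=c\cos^{2}\theta$ while $\partial_{x}\sin\theta=\cos\theta\,\theta_{x}$, this gives $\theta_{x}=c\cos\theta$, and likewise $\theta_{y}=d\cos\theta$. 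Differentiating once more, $\theta_{xx}=\cos\theta\,c'-\sin\theta\cos\theta\,c^{2}$ and $\theta_{yy}=\cos\theta\,d'-\sin\theta\cos\theta\,d^{2}$; substituting into (\ref{sineGordoneq}) and dividing by $\cos\theta$ yields
\[
\sin\theta=\tanh(C+D)=\frac{c'+d'}{c^{2}+d^{2}-4}.
\]

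Next, set $P=c'+d'$ and $Q=c^{2}+d^{2}-4$, so that $\sin\theta=P/Q$ and $\cos^{2}\theta=1-P^{2}/Q^{2}$. I would differentiate $\sin\theta=P/Q$ with respect to $x$, using $\partial_{x}\sin\theta=c\cos^{2}\theta$, $\partial_{x}P=c''$, $\partial_{x}Q=2cc'$; clearing the factor $Q^{2}$ and simplifying with $2c'-P=c'-d'$ produces the identity
\[
c\,Q^{2}+c\,P(c'-d')=c''\,Q .
\]
Differentiating this in $y$ and then dividing successively by $2D''$ and by $C'D'$ — legitimate on the open dense set where $C'$, $D'$, $D''$ do not vanish, exactly as in the proof of Theorem~\ref{aandb} — the equation should separate into
\[
\frac{c''}{c}-2c^{2}=2d^{2}-8-\frac{d''}{d}=:c_{4},
\]
a constant. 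Hence $c''=2c^{3}+c_{4}c$ and $d''=2d^{3}-(8+c_{4})d$; multiplying by $2c'$ and $2d'$ respectively and integrating once yields (\ref{cdiffeq}) and (\ref{ddiffeq}), with $c_{5},c_{6}$ the integration constants, and by continuity these ODEs extend to the whole domain.

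To pin down the relation $16+4c_{4}=c_{6}-c_{5}$ I would return to the once-differentiated identity $c\,Q^{2}+c\,P(c'-d')=c''\,Q$, substitute $c''=c(2c^{2}+c_{4})$ and $P(c'-d')=(c')^{2}-(d')^{2}$, divide by $c$, and then replace $(c')^{2}$ and $(d')^{2}$ using (\ref{cdiffeq}) and (\ref{ddiffeq}) while expanding $Q^{2}=(c^{2}+d^{2}-4)^{2}$. Every term depending on $c$ or $d$ should cancel, leaving precisely the scalar identity $16+4c_{4}=c_{6}-c_{5}$.

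The computation is entirely mechanical, so I do not expect a genuine obstacle; the only points requiring care are the bookkeeping across the two successive differentiations and keeping track of which factors ($C'$, $D'$, $D''$) are being divided out, which forces the "generic'' restriction before the passage to the closure by continuity. As in Theorem~\ref{aandb}, the whole argument is streamlined by the single identity $1-\tanh^{2}(C+D)=\cos^{2}\theta$.
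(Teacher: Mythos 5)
Your proposal is correct and follows essentially the same route as the paper: substitute the ansatz into the PDE, divide by $\cos\theta$ to get $\tanh(C+D)=(c'+d')/(c^2+d^2-4)$, differentiate in $x$ using $\mathrm{sech}^2=1-\tanh^2$, differentiate in $y$ to separate variables into the constant $c_4$, integrate the resulting third-order relations once, and substitute back to extract $16+4c_4=c_6-c_5$. Your bookkeeping with $P$ and $Q$ and the explicit final cancellation check are just a cleaner write-up of the paper's computation.
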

\begin{proof}
	Differentiating $\theta(x,y)$ and replacing into (\ref{sineGordoneq}) we have
	\[
	(C'(x) + D'(y))\cos(\theta(x,y)) - (C^2(x) + D^2(y) - 4)\cos(\theta(x,y))\sin(\theta(x,y)) = 0.
	\]
	Using the fact that $\sin\theta(x,y) = \tanh(C(x) + D(y))$ and dividing with $\cos(\theta(x,y))$ we get
	\[
	\tanh(C(x) + D(y)) = \frac{C''(x) + D''(y)}{(C'(x))^2 + (D'(y))^2 - 4}.
	\]
	Differentiating with respect to x and using $c(x) = C'(x)$, $d(y) = D'(y)$ and the identity $sech^2(\phi) = 1 - \tanh^2(\phi)$ we obtain
	\begin{equation}\label{candd}
		c(x)\left((c^2(x) + d^2(y) - 4)^2 - (c'(x) + d'(y))^2\right) 
	\end{equation} 
\[
= c''(x)(c^2(x) + d^2(y) - 4) - 2c(x)c'(x)(c'(x) + d'(y)).
\]
Differentiating with respect to y we get
\[
4c(x)d(y)d'(y)(c^2(x) + d^2(y) - 4) - 2c(x)(c'(x) + d'(y))d''(y) 
\]
\[
= 2c''(x)d(y)d'(y) - 2c(x)c'(x)d''(y).
\]
Therefore we derive the equations
\[
\frac{c''(x)}{c(x)} - 2c^2(x) = - \frac{d''(y)}{d(y)} + 2d^2(y) - 8 = c_4.
\]
So we get the equations
\begin{equation}\label{c}
c''(x) - 2c^3(x) = c_4c(x),
\end{equation}
\begin{equation}\label{d}
d''(y) - 2d^3(y) = - (8 + c_4)d(y).
\end{equation}
Starting with equation (\ref{c}), we have
\[
2c'(x)c''(x) = 4c^3(x)c'(x) + 2c_4c(x)c'(x)
\]
and by integrating both parts we get (\ref{cdiffeq}). By the same arguements we obtain equation 
(\ref{ddiffeq}) and replacing them in (\ref{candd}) we get the relationship $16 + 4c_4 = c_6 - c_5$.
\end{proof}
\begin{corollary}\label{c0andd0}
	By using the initial conditions $c(0) = d(0) = 0$, we have $c_5 = 16\gamma^2$, $c_6 = 16\delta^2$ and $c_4 = 4(\gamma^2 - \delta^2 - 1)$ and the equations (\ref{cdiffeq}) and (\ref{ddiffeq}) turn into
	\begin{equation}\label{ceq}
		c'(x)^2 = c^4(x) + 4(1 + \delta^2 - \gamma^2)c^2(x) + 4\gamma^2,
	\end{equation}
\begin{equation}\label{deq}
	d'(y)^2 = d^4(y) + 4(1 + \gamma^2 - \delta^2)d^2(y) + 4\delta^2. 
\end{equation}
\end{corollary}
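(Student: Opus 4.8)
The plan is to read the corollary off the preceding proposition by a direct substitution, exactly in the spirit of the corollary following Theorem~\ref{aandb}: impose the normalization $c(0)=d(0)=0$ in the first-order equations (\ref{cdiffeq}) and (\ref{ddiffeq}), which pins down the integration constants $c_{5}$ and $c_{6}$, and then use the compatibility relation $16+4c_{4}=c_{6}-c_{5}$ to eliminate $c_{4}$. Concretely, I would first evaluate (\ref{cdiffeq}) at $x=0$. Since $c(0)=0$, the quartic and quadratic terms both vanish, leaving $c'(0)^{2}=c_{5}$; in particular $c_{5}\ge 0$, so it is legitimate to introduce a real parameter $\gamma$ with $c_{5}=16\gamma^{2}$, the sign of $c'(0)$ being irrelevant because only its square enters.

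Symmetrically, evaluating (\ref{ddiffeq}) at $y=0$ and using $d(0)=0$ gives $d'(0)^{2}=c_{6}$, so $c_{6}\ge 0$ and we may set $c_{6}=16\delta^{2}$. Next I would substitute $c_{5}=16\gamma^{2}$ and $c_{6}=16\delta^{2}$ into the relation $16+4c_{4}=c_{6}-c_{5}$, solve for $c_{4}$, and hence also compute the coefficient $-(8+c_{4})$ appearing in the $d$-equation. Plugging these expressions, together with the values of $c_{5}$ and $c_{6}$, back into (\ref{cdiffeq}) and (\ref{ddiffeq}) then yields precisely (\ref{ceq}) and (\ref{deq}).

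There is no genuine obstacle in this corollary; each step is a one-line substitution. The only thing that demands attention is the clerical bookkeeping: keeping straight the sign of $c_{4}$ that comes out of the compatibility relation, the sign of the coefficient $-(8+c_{4})$ multiplying $d^{2}(y)$ in (\ref{ddiffeq}), and the numerical factor in the parametrization $c_{5}=16\gamma^{2}$, $c_{6}=16\delta^{2}$, so that these are carried consistently through to the final pair of equations.
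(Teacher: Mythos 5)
Your strategy is exactly the one the paper intends: the corollary is stated without proof as an immediate substitution into the preceding proposition, and your first steps ($c_5=c'(0)^2\ge 0$, $c_6=d'(0)^2\ge 0$, then eliminate $c_4$ via the compatibility relation) are the right ones. The gap is precisely in the step you defer as ``clerical bookkeeping'': carried out faithfully, the substitution does \emph{not} yield the displayed equations, because the corollary as printed is not consistent with the proposition it follows. Concretely: (i) the relation $16+4c_4=c_6-c_5$ together with $c_5=16\gamma^2$, $c_6=16\delta^2$ forces $c_4=4(\delta^2-\gamma^2-1)$, not $4(\gamma^2-\delta^2-1)$ as asserted; (ii) whichever value of $c_4$ one adopts, the coefficient of $c^2(x)$ in (\ref{ceq}) must be $c_4$ itself, yet the displayed coefficient $4(1+\delta^2-\gamma^2)$ equals $-4(\gamma^2-\delta^2-1)$, i.e.\ the \emph{negative} of the asserted $c_4$; (iii) the constant terms displayed in (\ref{ceq}) and (\ref{deq}) are $4\gamma^2$ and $4\delta^2$, while the asserted integration constants are $c_5=16\gamma^2$ and $c_6=16\delta^2$. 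What your derivation actually produces is $c'(x)^2=c^4(x)+4(\delta^2-\gamma^2-1)c^2(x)+16\gamma^2$ and $d'(y)^2=d^4(y)+4(\gamma^2-\delta^2-1)d^2(y)+16\delta^2$, which is not the stated pair.

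For what it is worth, the example that follows the corollary ($\gamma=\delta=1$, $c(x)=\sqrt{2}\tanh(\sqrt{2}x)$) satisfies $c'(x)^2=c^4(x)-4c^2(x)+4$, which is consistent with the normalization $c_5=4\gamma^2$ (i.e.\ $c'(0)=\pm 2\gamma$) and with a quadratic coefficient equal to $4(\gamma^2-\delta^2-1)$, the opposite sign to the one displayed in (\ref{ceq}); this pins down where the misprints are. So a complete proof cannot end with ``plugging back in yields precisely (\ref{ceq}) and (\ref{deq})'': you must either correct the statement of the corollary, or trace the sign in the relation $16+4c_4=c_6-c_5$ back through the proof of the preceding proposition and fix it there.
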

We shall demonstrate now how to apply the B{\"a}cklund transformation to derive the corresponding solution of the sinh-Gordon equation. To simplify the calculations we define a new variable system
\[
	X(x) = \int_{0}^{x} \sin\theta(t,0)dt, \quad Y(x,y) = \int_{0}^{y} \cos\theta(x,s)ds.
\]
	\begin{proposition}
	If $\theta(x,y)$ is a solution of (\ref{sineGordoneq}) of the form (\ref{sin(theta)}) and $d(0) = 0$ then 
	\begin{equation}\label{Sol3}
	\tanh(\frac{w(x,y)}{2}) = L 
	\frac{\tanh(\frac{w(0,0)}{2})e^{-2X} + L\tan(\frac{\sqrt{|4 - c^2(x)|}}{2}Y)}{L - \tanh(\frac{w(0,0)}{2})e^{-2X}\tan(\frac{\sqrt{|4 - c^2(x)|}}{2}Y)}
	\end{equation}
	where $L = L(x) = \frac{\sqrt{|4 - c(x)^{2}|}}{c(x) - 2}$.
\end{proposition}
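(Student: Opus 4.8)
The plan is to integrate the B{\"a}cklund system \eqref{Back1}--\eqref{Back2} in two stages, first along the line $\{y=0\}$ and then along each vertical fibre $\{x=\text{const.}\}$, after rewriting everything in terms of the single unknown $t:=\tanh(w/2)$, for which $\sinh w=2t/(1-t^{2})$ and $\cosh w=(1+t^{2})/(1-t^{2})$.

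First I would record the two partial derivatives of $\theta$ forced by the ansatz \eqref{sin(theta)}: differentiating $\sin\theta=\tanh(C(x)+D(y))$ in $x$ and in $y$ and using $1-\tanh^{2}=1-\sin^{2}\theta=\cos^{2}\theta$ gives $\theta_{x}=c(x)\cos\theta$ and $\theta_{y}=d(y)\cos\theta$. In particular $\theta_{y}(x,0)=d(0)\cos\theta(x,0)=0$ by the hypothesis $d(0)=0$, so \eqref{Back1} restricted to $y=0$ becomes the single ODE $w_{x}(x,0)=-2\sinh w(x,0)\sin\theta(x,0)$. Substituting $t=\tanh(w/2)$ turns this into $t_{x}=-2t\sin\theta(x,0)$, which integrates immediately to
\[
\tanh\tfrac{w(x,0)}{2}=\tanh\tfrac{w(0,0)}{2}\,e^{-2X(x)},\qquad X(x)=\int_{0}^{x}\sin\theta(t,0)\,dt,
\]
and this supplies the value of $t$ at $y=0$ needed in the second stage.

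Next, fixing $x$ and feeding $\theta_{x}=c(x)\cos\theta$ into \eqref{Back2} gives $w_{y}=-\cos\theta\,(2\cosh w+c(x))$, which in the variable $t$ becomes the separable Riccati equation
\[
t_{y}=-\tfrac12\cos\theta\,\bigl((2-c(x))\,t^{2}+(2+c(x))\bigr),
\]
with $c(x)$ acting as a constant parameter. Separating variables and integrating from $0$ to $y$ yields $\int\frac{dt}{(2-c)t^{2}+(2+c)}=-\tfrac12 Y(x,y)$; since $L=L(x)=\sqrt{|4-c^{2}|}/(c-2)$ satisfies $(2-c)L^{2}=2+c$ and $(2-c)L=-\sqrt{|4-c^{2}|}$, the antiderivative on the left equals $-\frac{1}{\sqrt{|4-c^{2}|}}\arctan(t/L)$ when $|c(x)|<2$ (and the corresponding $\arctanh$ expression when $|c(x)|>2$). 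Fixing the constant of integration by the first-stage value $t(x,0)=\tanh\tfrac{w(0,0)}{2}e^{-2X}$ and then clearing $\arctan$ with the addition formula $\tan(\alpha+\beta)=\frac{\tan\alpha+\tan\beta}{1-\tan\alpha\tan\beta}$ collapses the identity directly into the quotient \eqref{Sol3}.

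Once these two reductions are in place the computation is essentially forced, so I expect the only real work to be the bookkeeping of signs and branches: selecting $\sqrt{|4-c^{2}(x)|}$ and the correct antiderivative ($\arctan$ versus $\arctanh$) according to $\operatorname{sgn}(4-c^{2}(x))$, so that the single displayed formula --- written with $|4-c^{2}|$, and with $\tan$ in the oscillatory regime $|c|<2$ --- is the valid closed form, and checking that the boundary term at $y=0$ is exactly the factor $\tanh\tfrac{w(0,0)}{2}e^{-2X}$ produced in the first stage. This is where I would be most careful, though it is only mildly delicate.
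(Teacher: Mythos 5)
Your proposal is correct and follows essentially the same route as the paper: integrate \eqref{Back1} along $y=0$ (using $d(0)=0$ to kill $\theta_y$) to get $\tanh\frac{w(x,0)}{2}=\tanh\frac{w(0,0)}{2}e^{-2X}$, then integrate \eqref{Back2} in $y$ with $\theta_x=c(x)\cos\theta$ and apply the tangent addition formula. Your substitution $t=\tanh(w/2)$ turning the fibre equation into a Riccati equation is just the explicit form of the paper's direct evaluation of $\int dt/(c+2\cosh t)$, and your remark on the $\arctan$ versus $\arctanh$ branch according to $\operatorname{sgn}(4-c^2)$ is a point the paper leaves implicit behind the absolute value.
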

\begin{proof}
	\par By setting $y = 0$ in (\ref{Back1}), we get
	\[
		\frac{\partial w(x,0)}{\partial x} = -2\sinh w(x,0)\sin\theta(x,0)
	\]
	\[
	\int_{w(0,0)}^{w(0,y)} \frac{ds}{\sinh s} = - 2\int_{0}^{x} \sin\theta(t,0)ds = -2X(x)
	\]

	\begin{align}
		\tanh(\frac{w(x,0)}{2}) = \tanh(\frac{w(0,0)}{2})\exp(-2X(x)).
	\end{align}
	\par To calculate $w(x,y)$ we use equation (\ref{Back2}) to obtain that
	\[
	\frac{\partial w(x,y)}{\partial y} = - (2\cosh w(x,y) + c(x))\cos\theta(x,y)
	\]
	\[
	\int_{w(x,0)}^{w(x,y)}  \frac{dt}{c(x) + 2\cosh t} = -\int_{0}^{y}\cos\theta(x,s)ds = -Y(x,y)
	\]
	\[
	\arctan(\frac{1}{L}\tanh(\frac{w(x,y)}{2}) - \arctan(\frac{1}{L}\tanh(\frac{w(x,0)}{2})) = 
	\frac{\sqrt{|4 - c^{2}|}}{2}Y
	\]
	where $L = L(x) = \frac{\sqrt{|4 - c(x)^{2}|}}{c(x) - 2}$ and so 
	\[
	\tanh(\frac{w(x,y)}{2}) = L 
	\tan(\arctan(\frac{\exp(-2X)}{L}) + \frac{\sqrt{|4 - c^{2}|}}{2}Y).
	\]
	Using the trigonemetric identity $\tan(x + y) = \frac{\tan(x) + \tan(y)}{1 - \tan(x)\tan(y)}$ we derive forlula (\ref{Sol3}).
\end{proof}
Since we calculated a family of solutions of the sinh-Gordon equation, we can construct the corresponding harmonic map with target the upper-half plane.
\begin{example}
	Using Corollary \ref{c0andd0} and choose the coefficients $\gamma = \delta = 1$. Then 
	$c(x) = \sqrt{2}\tanh(\sqrt{2}x)$ and $d(y) = - \sqrt{2}\tanh(\sqrt{2}y)$. We have to remark that we choose the sign of $c(x)$ and $d(y)$ so that 
	\[
	-1 < \frac{c'(x) + d'(y)}{c^2(x) + d^2(y) - 4} < 1.
	\]
	Then we get a solution $\theta$ of the sine-Gordon equation (\ref{sineGordoneq}) that is given by
	\[
	\tan(\frac{\theta(x,y)}{2}) = \frac{\cosh(\sqrt{2}x) - \cosh(\sqrt{2}y)}{\cosh(\sqrt{2}x) + \cosh(\sqrt{2}y)}.
	\]
	The corresponding solution $w$ of the sinh-Gordon equation (\ref{sinh-Gordon eqn}) is given by
	\[
	\tanh(\frac{w(x,y)}{2}) = \frac{\sqrt{2}\sh(\sqrt{2}y)}{\sqrt{2}\sh(\sqrt{2}x) - 2\ch(\sqrt{2}x)}.
	\]
	To construct a harmonic map we use Proposition \ref{PPFD} and with initial conditions $S(0,0) = 1/2$ and $R(0,0) = 0$ we get
	\[
	I_1(x) = \int_{0}^{x}\frac{\ch^2(\sqrt{2}t) - 1}{\ch^2(\sqrt{2}t) + 1}dt = x - \arctanh(\frac{\tanh(\sqrt{2}x)}{\sqrt{2}}),
	\]
	\[
	I_2(x,y) = \int_{0}^{y} \frac{4\sqrt{2}fg\sh(\sqrt{2}s)\ch(\sqrt{2}s)}
	{(f^2 - 2\sh^2(\sqrt{2}s))(g^2 + \ch^2(\sqrt{2}s))}ds 
	\]
	\[
	= \frac{1}{2}\log(\frac{f^2 + 1 - \ch(2\sqrt{2}y)}{2g^2 + \ch(2\sqrt{2}y) + 1}
	\frac{2g^2 + 2}{f^2}
	),
	\]
		\[
	I_3(x) = \int_{0}^{x} e^{2x}e^{-2\arctanh(\frac{\tanh(\sqrt{2}x)}{\sqrt{2}})}
	\frac{4\ch(\sqrt{2}t)}{3 + \ch(2\sqrt{2}t)}dt
	\]
	\[
	= 
	\frac{4e^{2x}}
	{4\ch(\sqrt{2}x) + 2\sqrt{2}\sh(\sqrt{2}x)} - 1
	\]
	\[
	I_4(x,y) = 
	\frac{2g^2 + 2}{f^2}
	\int_{0}^{y}
	\frac{\sh(\sqrt{2}s)(g^2 - \ch^2(\sqrt{2}s))}{(g^2 + \ch^2(\sqrt{2}s))^2}ds
	\]
	\[
	= 2e^{2x}
	(
	\frac{\ch(\sqrt{2}y)(\sqrt{2}\sh(\sqrt{2}x) - 2\ch(\sqrt{2}x))}{2 + \ch(2\sqrt{2}x) + \ch(2\sqrt{2}y)}
	- \frac
	{\sqrt{2}\sh(\sqrt{2}x) - 2\ch(\sqrt{2}x)}
	{3 + \ch(2\sqrt{2}x)}
	)
	\]
	where $f = f(x) = \sqrt{2}\sh(\sqrt{2}x) - 2\ch(\sqrt{2}x)$ and $g = g(x) = \ch(\sqrt{2}x)$. Let
	\begin{align}
		S(x,y) = e^{2x}
		\frac{2 + 3\ch(2\sqrt{2}x) - \ch(2\sqrt{2}y) - 2\sqrt{2}\sh(2\sqrt{2}x)}
		{2 + \ch(2\sqrt{2}x) + \ch(2\sqrt{2}y)}.
	\end{align}
	\begin{align}
		R(x,y) =4e^{2x}
		\left(\frac{\ch(\sqrt{2}y)(2\ch(\sqrt{2}x) - \sqrt{2}\sh(\sqrt{2}x))}{2 + \ch(2\sqrt{2}x) + \ch(2\sqrt{2}y)}\right)
	- 2
	\end{align}

Then $u = R + iS$ a harmonic map from a surface to the upper-half plane equipped with Poicare metric.
\end{example}

\textbf{Conclusions.}
We have found new solutions of the sinh-Gordon equation and the sine-Gordon equation, motivated by the B{\"a}cklund transformation which relates solutions of these equations. Next in each such class of solutions we have provided an example of a corresponding harmonic map. Our aim next is to study sine-Gordon type equations, not necessarily elliptic, and their symmetries in order to extend our results. This project is currently under investigation and it is motivated by the study of wave maps between Lorentz spaces.

\textbf{Acknowledgements.} The author would like to thank Prof. C. Daskaloyannis, Prof. A. Fotiadis and Dr. E. Papageorgiou for their help and support.

\vspace{20pt}
\address{
	\noindent\textsc{Giannis Polychrou:}
	\href{mailto:ipolychr@math.auth.gr}
	{ipolychr@math.auth.gr}\\
	Department of Mathematics, Aristotle University of Thessaloniki,
	Thessaloniki 54124, Greece}
\end{document}